\newtheorem{theorem}{Theorem}[section]
\newtheorem{lemma}[theorem]{Lemma}
\newtheorem{proposition}[theorem]{Proposition}
\newtheorem{corollary}[theorem]{Corollary}
\theoremstyle{definition}
\theoremstyle{definitions}
\newtheorem{definition}[theorem]{Definition}
\newtheorem{remark}[theorem]{Remark}
\newtheorem{example}[theorem]{Example}
\theoremstyle{notations}
\theoremstyle{remarks}
\newcommand{\sub}{\subseteq}
\newcommand{\lo}{\longrightarrow}
\newcommand{\wt}{\widetilde}
\newcommand{\vf}{\varphi}
\newcommand{\fr}{\frac}
\newcommand{\al}{\alpha}
\newcommand{\la}{\lambda}
\newcommand{\bt}{\beta}
\newcommand{\ti}{\tilde}
\newcommand{\tx}{\textit}
\newcommand{\sq}{\simeq}
\newcommand{\lk}{\langle}
\newcommand{\rg}{\rangle}
\begin{document}
\author[A. Pakdaman , H. Torabi and B. Mashayekhy]
{Ali~Pakdaman,$\ \ $Hamid~Torabi and Behrooz~Mashayekhy $^*$ }

\title[Small loop spaces and covering theory of ...]
{small loop spaces and covering theory of non-homotopically Hausdorff spaces}
\subjclass[2010]{57M10, 57M05, 55Q05, 57M12}
\keywords{Covering space, Small loop group, Small loop space, Semi-locally small loop space, Homotopically Hausdorff space.}
\thanks{$^*$Corresponding author}
\thanks{E-mail addresses: alipaky@yahoo.com; hamid$_{-}$torabi86@yahoo.com and bmashf@um.ac.ir}
\maketitle

\begin{center}
{\it Department of Pure Mathematics, Center of Excellence in Analysis on Algebraic Structures, Ferdowsi University of Mashhad,\\
P.O.Box 1159-91775, Mashhad, Iran.}
\end{center}

\vspace{0.4cm}
\begin{abstract}
In this paper we devote to spaces that are not homotopically hausdorff and study their covering spaces. We introduce the notion of small covering and prove that every small covering of $X$ is the universal covering in categorical sense. Also, we introduce the notion of semi-locally small loop space which is the necessary and sufficient condition for existence of universal cover for non-homotopically hausdorff spaces, equivalently existence of small covering spaces. Also, we prove that for semi-locally small loop spaces, $X$ is a small loop space if and only if every cover of $X$ is trivial if and only if $\pi_1^{top}(X)$ is an indiscrete topological group.
\end{abstract}
\vspace{0.5cm}
\section{Introduction and Motivation}
Recall that a continuous map $p:\wt{X}\lo X$ is called a $\textit {covering}$ of $X$, and $\wt{X}$ is called a $\textit {covering space}$ of $X$, if for every $x\in X$ there is an open
subset $U$ of $X$ with $x\in U$ such that $U$ is $\tx{evenly covered}$ by $p$, that is,
$p^{-1}(U)$ is a disjoint union of open subsets of $\wt{X}$ each of which is mapped
homeomorphically onto $U$ by $p$.

In the classical covering theory, one assumes that $X$ is, in addition, connected, locally path connected and wishes to classify all path connected covering spaces of $X$
and to find among them the $\tx{universal covering space}$ in categorical sense, that is, a covering
$p:\wt{X}\lo X$ with the property that for every covering $q:\wt{Y}\lo X$ by a path connected space $\wt{Y}$ there is a covering $f:\wt{X}\lo\wt{Y}$ such that $q\circ f= p$. If $X$
is locally path connected, we have the following well-known result which
can be found for example in \cite{S}:

\emph{Every simply connected covering space of $X$ is a universal covering space.
Moreover, X admits a simply connected covering space if and only if $X$
is semi-locally simply connected, in which case the coverings $p:(\wt{X},\ti{x})\lo(X,x)$
with path-connected $\wt{X}$ are in direct correspondence with the conjugacy classes of
subgroups of $\pi_1(X,x)$ via the monomorphism $p_*:\pi_1(\wt{X},\ti{x})\lo\pi_1(X,x)$.}

Outside of locally nice spaces, the traditional theory of covering is not as
pleasant. Although H. Fischer and A. Zastrow in \cite{FZ} studied universal covers of homotopically hausdorff spaces.
They defined generalized regular coverings of topological space $X$ as functions
$p:\wt{X}\loX$ satisfying the following conditions for some normal subgroup $H$ of
$\pi_1(X)$:\\
$U_1$. $\wt{X}$ is a connected, locally path connected space.\\
$U_2$. The map $p:\wt{X}\lo X$ is a continuous surjection and $\pi_1(p):\pi_1(\bar{X})\lo\pi_1(X)$
is a monomorphism onto $H$.\\
$U_3$. For every connected, locally path connected space $Y$, for every continuous function $f:(Y,y)\lo(X, x)$
with $f_*(\pi_1(Y, y))\sub H$, and for every $\ti{x}\in \ti{X}$ with $p(\ti{x})=x$, there is a
unique continuous $g:(Y, y)\lo (\ti{X},\ti{x})$ with $p\circ g=f$.

This generalized notion of universal covering $p:\wt{X}\lo X$ enjoys
most of the usual properties, with the possible exception of
evenly covered neighborhoods. If $X$ is path connected, locally path-connected and semi-locally simply connected,
then $p:\wt{X}\lo X$ agrees with the classical universal covering.
A necessary condition for the standard construction to yield a
generalized universal covering is that X be homotopically Hausdorff. A space $X$ is $\textit{homotopically Hausdorff}$ if given any point $x$ in $X$ and any nontrivial homotopy class
$[\al]\in\pi_1(X,x)$, there is a neighborhood $U$ of $x$ which contains no representative for $[\al]$.
Accordingly, we would like to
study coverings of non-homotopically Hausdorff spaces and discover the topology type of their
universal covering spaces.
\begin{definition} (\cite{V})
A loop $\al:(S^1, 0)\lo(X,x)$ is $small$ if and only if there exists a representative of the homotopy class $[\al]\in\pi_1(X,x)$
in every open neighborhood $U$ of $x$. A $small\ loop$ is nontrivial small loop if it is not homotopically trivial. The $\tx{small loop
group}$ $\pi_1^s(X,x)$ of $(X,x)$ is the subgroup of the fundamental group $\pi_1(X,x)$ consisting of homotopy classes of small
loops. A non-simply connected space X is called {\it small loop space} if for every $x\in X$, every loop $\al:(I,\partial I)\lo (X,x)$ is small.
\end{definition}
\begin{definition}The SG subgroup of $\pi_1(X, x)$ is denoted by $\pi_1^{sg}(X, x)$ and is generated by $\{[\al*\bt*\al^{-1}]\ |\ [\bt]\in\pi_1^s(X,\al(1))\}$ for which $\al$ is a path with $\al(0)=x$.
\end{definition}
It is easy to check that the small loop group is a subgroup, but not necessarily a normal subgroup of $\pi_1(X,x)$. Furthermore,
it is a functor from $hTop_*$ to $Groups$. The presence of small loops implies the absence of semi-locally simply connectedness and homotopically Hausdorffness. While general non-semi-locally simply connected spaces may have non-homotopic nontrivial loops at every neighborhood of some point, in the case of a small loop the homotopy type of one loop can be chosen for all neighborhoods of the base point.

In section 2, we find out some relation between small loop groups, SG subgroups and fundamental groups.
Z. Virk in \cite{V} constructed a small loop space by using Harmonic Archipelago and studied the relation between covering spaces and small loop group, for example, for the universal covering space $p:\wt{X}\lo X$ of non-homotopically Hausdorff space $X$, $p_*\pi_1(\wt{X},\ti{x})$ contains $\pi_1^{sg}(X,x)$ as a subgroup and so $X$ has no simply connected cover.  We show that this holds for every cover of $X$. In section 3, we show that if $\wt{X}$ is a small loop space, then $\wt{X}$ is a universal cover of $X$. Also, in this case $X$ is not homotopically Hausdorff and $\pi_1^s(X,x)=\pi_1^{sg}(X,x)=p_*\pi_1(\wt{X},\ti{x})$ which is normal in $\pi_1(X,x)$.

Finally, in section 4, we present the main result of this article which state that a space $X$ has a small loop space as cover if and only if $X$ is a semi-locally small loop space, that is, for every $x\in X$ there exists an open neighborhood $U$ such that $i_*\pi_1(U,y)=\pi_1^s(X,y)$, for all $y\in U$, where $i:U\lo X$ is the inclusion. Moreover, we show that if $X$ is a semi-locally small loop space, then $X$ is a small loop space if and only if every cover of $X$ is trivial if and only if the topological fundamental group of $X$ is indiscrete.

throughout this article, all the homotopies between two paths are relative to end points.
\section{small loop groups}
The importance of small loops in the covering space theory was pointed out by Brodskiy,
Dydak, Labuz, and Mitra in \cite{BD1} and \cite{BD2} and by Virk in \cite{V}.
In this section, we study basic properties of small loops, small loop group and SG subgroup of fundamental group of non-homotopically Hausdorff spaces $X$ and their relations to the covering spaces of $X$.
It is obvious that if the topological space $X$ is not homotopically hausdorff then there exists $x\in X$ such that $\pi_1^s(X,x)\neq 1$.
\begin{theorem}
For every cover $p:\wt{X}\lo X$ and $x\in X$ we have $\pi_1^s(X,x)\leq\pi_1^{sg}(X,x)\leq p_*\pi_1(\wt{X},\tilde{x})$.
\end{theorem}
\begin{proof}
By definition $\pi_1^s(X,x)\leq\pi_1^{sg}(X,x)$. Let $[\al]\in \pi_1^s(X,x)$ and let $U$ be an evenly covered open neighborhood of $x$ such that $p^{-1}(U)=\cup_{j\in J}V_j$. Since $\al$ is a small loop, there exists a loop $\al_U:I\lo U$ such that $[\al]=[\al_U]$ which implies existence of the loop $\wt{\al_j}=(p|_{V_j})^{-1}\circ\al_U$. Therefore $[\al]=p_*([\wt{\al_j}])$ and hence $\pi_1^s(X,x)\sub p_*\pi_1(\wt{X},\tilde{x})$.
Let $[\al*\bt*\al^{-1}]\in\pi_1^{sg}(X,x)$, where $[\bt]\in \pi_1^s(X,\al(1))$. If $\wt{\al}$ is the lift of $\al$ with initial point $\ti{x}$ and end point $\ti{y}$, then $y:=p(\ti{y})=\al(1)$. Since $\pi_1^s(X,y)\leq p_*\pi_1(\wt{X},\tilde{y})$, we can assume $p_*([\wt{\bt}])=[\bt]$ for some $[\wt{\bt}]\in\pi_1(\wt{X},\ti{y})$ and then $p_*([\wt{\al}*\wt{\bt}*\wt{\al}^{-1}])=[\al*\bt*\al^{-1}]$ which implies that $\pi_1^{sg}(X,x)\leq p_*\pi_1(\wt{X},\tilde{x})$.
\end{proof}
\begin{corollary}
If there exists $x\in X$ such that $\pi_1^s(X,x)\neq 1$, then $X$ does not admit a simply connected cover.
\end{corollary}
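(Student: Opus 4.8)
The plan is to deduce the statement immediately from Theorem 2.1 by a contradiction argument. The substance of the corollary is precisely that a nontrivial small loop group obstructs the simple connectivity of every covering space, and Theorem 2.1 already records the crucial inclusion: the small loop group (indeed the SG subgroup) embeds into the image $p_*\pi_1(\wt{X},\ti{x})$ of the fundamental group of any cover. So the corollary is a one-line consequence once the theorem is in hand.

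First I would suppose, toward a contradiction, that $X$ admits a simply connected cover $p:\wt{X}\lo X$. Fixing the point $x\in X$ with $\pi_1^s(X,x)\neq 1$ guaranteed by the hypothesis, I would choose any $\ti{x}\in\wt{X}$ with $p(\ti{x})=x$. Since $\wt{X}$ is simply connected, $\pi_1(\wt{X},\ti{x})=1$, and consequently $p_*\pi_1(\wt{X},\ti{x})=1$ as well.

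Next I would invoke Theorem 2.1 for this particular cover and basepoint, which yields the chain $\pi_1^s(X,x)\leq\pi_1^{sg}(X,x)\leq p_*\pi_1(\wt{X},\ti{x})$. Combining this with $p_*\pi_1(\wt{X},\ti{x})=1$ forces $\pi_1^s(X,x)=1$, contradicting the choice of $x$. Hence no simply connected cover of $X$ can exist.

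There is no genuine obstacle in this argument; the entire difficulty has already been absorbed into the proof of Theorem 2.1, whose lifting construction shows that small loops and their conjugates persist in $p_*\pi_1(\wt{X},\ti{x})$. The only point deserving minor care is that the hypothesis supplies a single witnessing basepoint $x$, so one must apply Theorem 2.1 at that same $x$ rather than at an arbitrary point of $X$; with that caveat the proof is purely formal.
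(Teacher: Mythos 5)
Your proof is correct and is exactly the argument the paper intends: the corollary is stated as an immediate consequence of Theorem 2.1, and applying the inclusion $\pi_1^s(X,x)\leq p_*\pi_1(\wt{X},\ti{x})$ at the witnessing basepoint to a hypothetical simply connected cover forces $\pi_1^s(X,x)=1$, a contradiction. No difference in substance from the paper's (unwritten) proof, and your care about using the same basepoint $x$ is the only detail worth noting.
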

\begin{corollary}
Let $X$ be a connected, simply connected, locally path connected space. If
$G$ is a group acting properly on $X$, then $X/G$ has no small loop.
\end{corollary}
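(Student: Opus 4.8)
The plan is to recognize that a proper action of $G$ on a connected, simply connected, locally path connected space $X$ makes the quotient map into a covering whose total space is simply connected, and then to invoke Theorem 2.1, which was just established, to eliminate all small loops in the quotient. The whole argument is then essentially a one-line application of that theorem once the covering structure is in place.

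First I would verify that the quotient map $p:X\lo X/G$ is a covering map. Since $G$ acts properly (properly discontinuously) on $X$, every point of $X$ has an open neighborhood $U$ with $gU\cap U=\emptyset$ for all $g\neq e$; the local path connectedness of $X$ then guarantees that the image of such a $U$ in $X/G$ is an evenly covered open neighborhood, so $p$ is indeed a covering with $X$ as covering space. Since $X$ is simply connected, $\pi_1(X,\ti{x})$ is trivial for every $\ti{x}\in X$, whence $p_*\pi_1(X,\ti{x})=1$.

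Next I would apply Theorem 2.1 to the covering $p:X\lo X/G$, where $X$ plays the role of the covering space $\wt{X}$ and $X/G$ the role of the base. For any point $\ov{x}\in X/G$, choosing $\ti{x}\in p^{-1}(\ov{x})$, the theorem yields
$$\pi_1^s(X/G,\ov{x})\leq\pi_1^{sg}(X/G,\ov{x})\leq p_*\pi_1(X,\ti{x})=1.$$
Hence $\pi_1^s(X/G,\ov{x})=1$ for every $\ov{x}\in X/G$, which is precisely the statement that $X/G$ has no nontrivial small loop. The only step that genuinely requires care is the first one, namely confirming that the hypothesis of acting properly is exactly the condition producing a covering map with the evenly covered neighborhoods described above; once this is secured, the conclusion follows immediately from Theorem 2.1 together with the simple connectivity of $X$, and everything else is formal.
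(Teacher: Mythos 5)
Your proposal is correct and matches the paper's intended argument: the corollary is stated as an immediate consequence of Theorem 2.1 (via Corollary 2.2), precisely because a proper action on a connected, simply connected, locally path connected space makes $X\lo X/G$ a covering with $p_*\pi_1(X,\ti{x})=1$, forcing $\pi_1^s(X/G,\ov{x})=1$. Your filling in of the evenly-covered-neighborhood detail is exactly the routine verification the paper leaves implicit.
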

\begin{corollary}
For a cover $p:\wt{X}\lo X$, $\pi_1^{sg}(X,x)$ acts trivially on $p^{-1}(\{x\})$, that is, $\ti{x}.[\al]=\ti{x}$ for $\ti{x}\in p^{-1}(\{x\})$ and $[\al]\in \pi_1^{sg}(X,x)$.
\end{corollary}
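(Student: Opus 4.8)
The plan is to reduce the statement to Theorem~2.1 together with the standard description of the monodromy action of $\pi_1(X,x)$ on the fiber $p^{-1}(\{x\})$. Recall that for $\ti{x}\in p^{-1}(\{x\})$ and $[\al]\in\pi_1(X,x)$ this action is given by $\ti{x}.[\al]=\wt{\al}(1)$, where $\wt{\al}$ is the unique lift of $\al$ with $\wt{\al}(0)=\ti{x}$; the endpoint is well defined by the path-lifting property and is independent of the representative of $[\al]$ by the homotopy-lifting property. Thus the corollary asserts exactly that the lift starting at $\ti{x}$ of any representative of a class in $\pi_1^{sg}(X,x)$ is a loop.

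First I would recall the classical identification of the stabilizer of $\ti{x}$ under this action with $p_*\pi_1(\wt{X},\ti{x})$. Indeed, $\ti{x}.[\al]=\ti{x}$ means $\wt{\al}$ is a loop at $\ti{x}$, so $[\al]=p_*[\wt{\al}]\in p_*\pi_1(\wt{X},\ti{x})$; conversely, any element of $p_*\pi_1(\wt{X},\ti{x})$ is represented by $p\circ\wt{\gamma}$ for some loop $\wt{\gamma}$ at $\ti{x}$, and the lift of $p\circ\wt{\gamma}$ starting at $\ti{x}$ is $\wt{\gamma}$ itself, which ends at $\ti{x}$. Hence the stabilizer is precisely $p_*\pi_1(\wt{X},\ti{x})$.

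Next I would apply Theorem~2.1, which yields $\pi_1^{sg}(X,x)\leq p_*\pi_1(\wt{X},\ti{x})$. The one point to verify is that this containment holds for \emph{every} point $\ti{x}$ of the fiber, not merely for a single chosen basepoint, since the corollary quantifies over all $\ti{x}\in p^{-1}(\{x\})$. This is immediate from the proof of Theorem~2.1: there the lift of a small loop is produced inside the sheet $V_j$ of the evenly covered neighborhood containing the chosen point, and the lift of a conjugate $\al*\bt*\al^{-1}$ is assembled from the lift of $\al$ starting at that point; both constructions work verbatim for any point of the fiber.

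Combining the two facts, every $[\al]\in\pi_1^{sg}(X,x)$ lies in $p_*\pi_1(\wt{X},\ti{x})$, hence in the stabilizer of $\ti{x}$, so that $\ti{x}.[\al]=\ti{x}$ as required. I expect the only genuine obstacle to be the uniformity in $\ti{x}$ noted above; once that is secured, the result is a direct reading of Theorem~2.1 through the stabilizer--subgroup dictionary.
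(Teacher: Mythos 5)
Your proof is correct and is exactly the argument the paper intends: the corollary is stated without proof as an immediate consequence of Theorem~2.1, read through the standard identification of the stabilizer of $\ti{x}$ under the monodromy action with $p_*\pi_1(\wt{X},\ti{x})$ (an identification the paper itself invokes later, in the proof of Theorem~4.4). Your extra care in checking that the containment $\pi_1^{sg}(X,x)\leq p_*\pi_1(\wt{X},\ti{x})$ holds for every point of the fiber, not just one chosen basepoint, is a worthwhile point that the paper glosses over, and your justification of it from the sheet-by-sheet construction in the proof of Theorem~2.1 is valid.
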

\begin{example}
Fischer and Zastrow \cite[Examples 4.13, 4.14, 4.16]{FZ} introduced some spaces that admit generalized universal cover. Since for the existence of generalized universal covers, the condition homotopically Hausdorff is necessary, spaces which admit generalized universal cover has no small loop. For example, subsets of closed surfaces, 1-dimensional compact Hausdorff spaces, 1-dimensional separable metrizable spaces, and trees of manifolds.
\end{example}
\begin{remark}
If $X$ is small loop space, then $\pi_1^s(X,x)=\pi_1(X,x)$ and since $\pi_1^s(X,x)\leq\pi_1^{sg}(X,x)\leq\pi_1(X,x)$, for small loop spaces we have $\pi_1^s(X,x)=\pi_1^{sg}(X,x)=\pi_1(X,x)$.
Note that if $\pi_1^s(X,x)=\pi_1^{sg}(X,x)$, it is not necessary that $\pi_1^s(X,x)=\pi_1^{sg}(X,x)=\pi_1(X,x)$. For example, consider a non-simply connected, semi-locally simply connected space.
\end{remark}
\section{small loop spaces and covers}
In keeping with modern nomenclature, the term universal covering
space will always mean a categorical universal object, that is, a covering
$p:\wt{X}\lo X$ with the property that for every covering $q:\wt{Y}\lo X$ by a path connected space $\wt{Y}$ there is a covering $f:\wt{X}\lo\wt{Y}$ such that $q\circ f= p$. The following theorem comes from definitions.
\begin{theorem}
Every cover of a small loop space $X$ is homeomorphic to $X$.
\end{theorem}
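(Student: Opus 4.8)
The plan is to reduce the statement to the standard fact that a covering map all of whose fibres are singletons is a homeomorphism, and to force the singleton condition out of the small-loop hypothesis via Theorem 2.1. First I would record the effect of the hypothesis on the fundamental group: since $X$ is a small loop space, every loop at every point is small, so $\pi_1^s(X,x)=\pi_1(X,x)$ for each $x$; combined with the chain $\pi_1^s(X,x)\le\pi_1^{sg}(X,x)\le\pi_1(X,x)$ of Remark 2.6 this upgrades to $\pi_1^s(X,x)=\pi_1^{sg}(X,x)=\pi_1(X,x)$.

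Next, fix a cover $p:\wt{X}\lo X$ with $\wt{X}$ path connected and pick $\tilde{x}\in p^{-1}(x)$. Theorem 2.1 gives $\pi_1(X,x)=\pi_1^s(X,x)\le p_*\pi_1(\wt{X},\tilde{x})$, while $p_*\pi_1(\wt{X},\tilde{x})\le\pi_1(X,x)$ holds for any covering. Hence $p_*\pi_1(\wt{X},\tilde{x})=\pi_1(X,x)$, so $p_*$ is onto; as $p_*$ is injective for every covering, it is in fact an isomorphism, and in particular the index $[\pi_1(X,x):p_*\pi_1(\wt{X},\tilde{x})]$ equals $1$.

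Then I would invoke the standard description of the fibre of a path-connected cover: the monodromy action of $\pi_1(X,x)$ on $p^{-1}(x)$ is transitive (because $\wt{X}$ is path connected) with stabiliser $p_*\pi_1(\wt{X},\tilde{x})$ at $\tilde{x}$, so $p^{-1}(x)$ is in bijection with the coset space of $p_*\pi_1(\wt{X},\tilde{x})$ in $\pi_1(X,x)$. By the previous step that coset space is a single point, hence every fibre is a singleton. Equivalently one may argue straight from Corollary 2.4: since $\pi_1^{sg}(X,x)=\pi_1(X,x)$ acts trivially on $p^{-1}(x)$ and the action is transitive, the orbit $p^{-1}(x)$ coincides with $\{\tilde{x}\}$.

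Finally, with all fibres singletons the covering $p$ is a continuous bijection; being a covering map it is an open local homeomorphism, and an open continuous bijection is a homeomorphism, which completes the argument. The only genuine subtlety, and the step I would be most careful to quote correctly, is the fibre count, namely that for a path-connected cover the cardinality of $p^{-1}(x)$ equals the index of $p_*\pi_1(\wt{X},\tilde{x})$ in $\pi_1(X,x)$; once that is in hand, everything else is immediate from Theorem 2.1 and the definitions.
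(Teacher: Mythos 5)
Your proposal is correct and follows essentially the same route as the paper: both use Theorem 2.1 together with $\pi_1^s(X,x)=\pi_1(X,x)$ (valid since $X$ is a small loop space) to conclude $p_*\pi_1(\wt{X},\tilde{x})=\pi_1(X,x)$, hence that the cover is one-sheeted and therefore a homeomorphism. You merely make explicit the two facts the paper leaves implicit, namely the fibre--coset bijection for path-connected covers and that a one-sheeted covering is a homeomorphism.
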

\begin{proof}
Let $p:\wt{X}\lo X$ be a cover of small loop space $X$. Then by Theorem 2.1 $\pi_1^s(X,x)\leq p_*\pi_1(\wt{X},\tilde{x})\leq\pi_1(X,x)=\pi_1^s(X,x)$ , for each $x\in X$ which implies that $\wt{X}$ is a one sheeted cover of $X$ and so they are homeomorphic.
\end{proof}
\begin{definition}
By a $\tx{small covering}$ of a topological space $X$ we mean a covering $p:\wt{X}\lo X$ such that $\wt{X}$ is a small loop space.
\end{definition}
\begin{lemma}
If $X$ has a small covering $p:\wt{X}\lo X$, then a loop $\al$ in $X$ is small if and only if there exists a lift $\wt{\al}$ of $\al$ in $\wt{X}$ which is closed.
\end{lemma}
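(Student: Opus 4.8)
The plan is to prove the two implications separately, in each case reducing the smallness of a loop to membership in the image subgroup $p_*\pi_1(\wt{X},\ti{x})$, and then to exploit the hypothesis that $\wt{X}$ is itself a small loop space.

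For the forward implication, suppose $\al$ is a small loop based at $x$. Then $[\al]\in\pi_1^s(X,x)$, and Theorem 2.1 gives $\pi_1^s(X,x)\leq p_*\pi_1(\wt{X},\ti{x})$, so $[\al]\in p_*\pi_1(\wt{X},\ti{x})$. I would then invoke the standard lifting criterion for loops in covering theory: the unique lift $\wt{\al}$ of $\al$ with $\wt{\al}(0)=\ti{x}$ is closed if and only if $[\al]\in p_*\pi_1(\wt{X},\ti{x})$. This criterion rests only on unique path lifting and the homotopy lifting property, both of which hold for any covering with evenly covered neighborhoods (via a Lebesgue-number argument), so no local niceness of $X$ or $\wt{X}$ is required. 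Hence the lift of $\al$ starting at $\ti{x}$ is closed.

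For the reverse implication, suppose $\wt{\al}$ is a closed lift of $\al$; say it is based at $\ti{x}\in p^{-1}(x)$, so that $p\circ\wt{\al}=\al$. Since $\wt{X}$ is a small loop space, every loop in $\wt{X}$ is small, and in particular $\wt{\al}$ is small at $\ti{x}$. I would then show that $p$ carries small loops to small loops. Given any open neighborhood $U$ of $x$, the set $p^{-1}(U)$ is an open neighborhood of $\ti{x}$; smallness of $\wt{\al}$ furnishes a loop $\wt{\bt}$ based at $\ti{x}$ with image in $p^{-1}(U)$ and $[\wt{\bt}]=[\wt{\al}]$. Then $p\circ\wt{\bt}$ is a loop based at $x$ with image in $U$ satisfying $[p\circ\wt{\bt}]=p_*[\wt{\al}]=[\al]$, so it represents $[\al]$ inside $U$. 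As $U$ was arbitrary, $\al$ is small.

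I expect the only point needing care to be the forward direction's appeal to the lifting criterion: because the ambient spaces here are deliberately not locally nice, I would make explicit that the criterion depends solely on unique path lifting and homotopy lifting for covering maps, and not on any semi-local simple connectivity. The reverse direction is essentially the functoriality of the small loop group already noted after Definition 1.2, specialized to the covering projection $p$; the main thing to verify there is that continuity of $p$ makes $p^{-1}(U)$ a neighborhood of $\ti{x}$, so that representatives downstairs are obtained simply by projecting representatives upstairs.
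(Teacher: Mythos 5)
Your proof is correct, but it is organized differently from the paper's, most visibly in the reverse direction. For the forward implication the paper redoes, inside the proof, exactly the construction you outsource to Theorem 2.1: it takes a representative $\al_U$ of $[\al]$ in an evenly covered neighborhood $U$, lifts it through the sheet homeomorphism $(p|_V)^{-1}$ to obtain a closed lift, and then uses the monodromy fact that homotopic loops based at $x$ have lifts from $\ti{x}$ ending at the same point to conclude that the lift of $\al$ itself is closed. Your route---cite $\pi_1^s(X,x)\leq p_*\pi_1(\wt{X},\ti{x})$ and then the loop-lifting criterion---packages those same two steps abstractly, and your caveat is the right one: the criterion rests only on unique path lifting and homotopy lifting, which hold for any covering via Lebesgue-number arguments, so no local niceness is needed (note that, like the paper, your forward direction never uses that $\wt{X}$ is a small loop space, only that it is a cover). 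For the reverse implication the two arguments genuinely diverge: the paper again works through an evenly covered $U$ and its sheet $V$, takes a representative $\wt{\al}_V$ of $\wt{\al}$ inside $V$, projects it by $p|_V$, and then disposes of arbitrary neighborhoods of $x$ with the closing remark that every open subset of an evenly covered set is evenly covered; you instead prove the cleaner statement that any continuous map carries small loops to small loops, by pulling an arbitrary neighborhood $U$ of $x$ back to $p^{-1}(U)$, representing $[\wt{\al}]$ there, and pushing forward. Your version is shorter and strictly more general---it uses nothing about the covering structure in that direction, only continuity of $p$ and smallness of all loops in $\wt{X}$---whereas the paper's version keeps the entire lemma inside one concrete evenly-covered-neighborhood setup, at the cost of the extra remark needed to pass from the evenly covered $U$ to arbitrary neighborhoods.
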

\begin{proof}
Let $\al$ be a small loop at $x$ in $X$, $U$ be an evenly covered open subset of $X$ containing $x$ and $\al_U$ be a loop at $x$ in $U$ that is homotopic to $\al$. If $\ti{x}\in p^{-1}(\{x\})$ and $V$ be the homeomorphic copy of $U$ in $p^{-1}(U)$ which contains $\ti{x}$, then $\wt{\al_U}=(p|_{V})^{-1}\circ\al_U$ is a loop at $\ti{x}$ in $\wt{X}$ which easily seen that it is a small loop. Since $\al\simeq\al_U$, if $\wt{\al}$ is the lift of $\al$ with initial point $\ti{x}$, then $\wt{\al}(1)=\wt{\al_U}(1)=\ti{x}$ which implies that $\al$ has closed lift. Conversely, assume $\al$ has closed lift $\wt{\al}$ and $U,V$ be chosen as above. Let $\wt{\al}_V$ be a loop at $\ti{x}$ in $V$ that is homotopic to $\wt{\al}$, then $\al_U=p\circ \wt{\al}_V$ is a loop at $x$ and homotopic to $\al$. Since every open subset of an evenly covered open subset is evenly covered, the result holds.
\end{proof}
\begin{corollary}
If $X$ has a small covering $p:\wt{X}\lo X$, then $X$ is not homotopically Hausdorff.
\end{corollary}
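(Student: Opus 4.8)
The plan is to produce, at some point of $X$, a nontrivial small loop; by the definitions of \emph{small loop} and \emph{homotopically Hausdorff} this immediately forces $X$ to fail the homotopically Hausdorff condition. Indeed, if $[\al]\in\pi_1^s(X,x)$ is nontrivial, then $[\al]$ is a nontrivial homotopy class possessing a representative in every neighborhood of $x$, which is exactly the negation of homotopical Hausdorffness at $x$. So it suffices to exhibit an $x\in X$ with $\pi_1^s(X,x)\neq 1$.

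To find such a loop, I would exploit that $\wt{X}$ is a small loop space. Being a small loop space, $\wt{X}$ is in particular non-simply connected, so there exist a point $\ti{x}\in\wt{X}$ and a loop $\wt{\al}$ based at $\ti{x}$ that is not null-homotopic; moreover, by definition of small loop space, this $\wt{\al}$ is small in $\wt{X}$. Set $x:=p(\ti{x})$ and $\al:=p\circ\wt{\al}$, a loop in $X$ based at $x$. Then $\wt{\al}$ is by construction a closed lift of $\al$ starting at $\ti{x}$, so the preceding lemma applies and shows that $\al$ is a small loop in $X$.

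The remaining point --- and the only place where something must genuinely be checked rather than read off --- is that $\al$ is \emph{nontrivial}. For this I would invoke that a covering map induces a monomorphism on fundamental groups (unique path lifting), so that $[\al]=p_*[\wt{\al}]\neq 1$ because $[\wt{\al}]\neq 1$. Hence $[\al]$ is a nontrivial small loop, giving $\pi_1^s(X,x)\neq 1$, and therefore $X$ is not homotopically Hausdorff. I do not expect any serious obstacle here: once the lemma is in hand, the argument is just a short transport of a nontrivial small loop from $\wt{X}$ down to $X$ via $p$, with the injectivity of $p_*$ serving only to keep the projected loop essential.
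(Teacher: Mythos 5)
Your proof is correct and follows exactly the route the paper intends: the corollary is stated without proof as an immediate consequence of Lemma 3.4, and the implicit argument is precisely yours --- take a nontrivial loop in the small loop space $\wt{X}$ (nontrivial ones exist since a small loop space is by definition non-simply connected), project it down, apply the lemma to see it is small, and use injectivity of $p_*$ to see it stays nontrivial, so $\pi_1^s(X,x)\neq 1$ and $X$ fails to be homotopically Hausdorff.
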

\begin{theorem}
If $X$ has a small covering $p:\wt{X}\lo X$, then $p_*\pi_1(\wt{X},\ti{x})$ is a normal subgroup of $\pi_1(X,x)$.
\end{theorem}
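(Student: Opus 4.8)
The plan is to show that $H:=p_*\pi_1(\wt{X},\ti{x})$ does not depend on the choice of the lift $\ti{x}\in p^{-1}(\{x\})$, and then to invoke the standard fact from covering theory (see \cite{S}) that, for a path connected cover, the subgroups $p_*\pi_1(\wt{X},\ti{x})$ obtained as $\ti{x}$ ranges over the fibre $p^{-1}(\{x\})$ are exactly the members of the conjugacy class of $H$ in $\pi_1(X,x)$. Once all of these subgroups coincide with $H$, one gets $g^{-1}Hg=H$ for every $g\in\pi_1(X,x)$, that is, $H$ is normal.

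First I would recall the elementary characterization that $[\al]\in p_*\pi_1(\wt{X},\ti{x})$ if and only if the lift $\wt{\al}$ of $\al$ starting at $\ti{x}$ is a loop (closed). Combining this with Lemma 3.3, a class $[\al]\in\pi_1(X,x)$ lies in $p_*\pi_1(\wt{X},\ti{x})$ precisely when $\al$ is small; in other words $p_*\pi_1(\wt{X},\ti{x})=\pi_1^s(X,x)$. The crucial point is that the forward implication of Lemma 3.3 is uniform in the fibre: for a small $\al$ and \emph{any} $\ti{x}\in p^{-1}(\{x\})$ one takes the sheet $V$ over the evenly covered neighborhood $U$ of $x$ containing $\ti{x}$ and lifts the representative $\al_U$ lying in $U$ through $(p|_V)^{-1}$, so that the lift of $\al$ starting at $\ti{x}$ is closed for every $\ti{x}$ in the fibre, not merely for one. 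Hence $p_*\pi_1(\wt{X},\ti{x})=\pi_1^s(X,x)$ for all $\ti{x}\in p^{-1}(\{x\})$, which is manifestly independent of $\ti{x}$.

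It then remains to translate this base-point independence into normality, which is exactly the step recalled in the first paragraph: the conjugates of $H$ are realized by the various lifts in $p^{-1}(\{x\})$, and their coincidence forces $H\trianglelefteq\pi_1(X,x)$. Equivalently, one may argue directly that $p$ is a regular covering, since by Lemma 3.3 a loop in $X$ has all of its lifts closed when it is small and none of its lifts closed when it is not, so for each loop either every lift is closed or none is --- the defining property of a regular cover. I expect the main obstacle to be the careful verification that Lemma 3.3 really yields a closed lift at every point of the fibre (so that $H=\pi_1^s(X,x)$ holds for all base points simultaneously), together with making explicit the path-connectedness of the small loop space $\wt{X}$ that is needed for the conjugacy-class description; both are what upgrade the pointwise statement of the lemma into the global, base-point-free conclusion.
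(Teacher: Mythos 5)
Your proof is correct, and it reaches the conclusion by a genuinely different organization than the paper's. The paper argues element-by-element: given $[\al]\in p_*\pi_1(\wt{X},\ti{x})$ and $[\bt]\in\pi_1(X,x)$, it lifts $\bt$ to $\wt{\bt}$ starting at $\ti{x}$, takes a closed lift $\wt{\al}$ of $\al$ based at $\wt{\bt}(1)$ (available by Lemma 3.3), and notes that $\wt{\bt}*\wt{\al}*\wt{\bt}^{-1}$ is a loop at $\ti{x}$ projecting to $\bt*\al*\bt^{-1}$, so the conjugate lies in $p_*\pi_1(\wt{X},\ti{x})$. You instead prove the subgroup-level statement $p_*\pi_1(\wt{X},\ti{x})=\pi_1^s(X,x)$ for \emph{every} $\ti{x}$ in the fibre --- which is the paper's later Proposition 3.6, strengthened to hold at all fibre points simultaneously, and derived from Lemma 3.3 rather than from Theorem 2.1 as the paper does --- and then invoke the standard dictionary: conjugates of $p_*\pi_1(\wt{X},\ti{x})$ are the subgroups $p_*\pi_1(\wt{X},\ti{y})$, $\ti{y}\in p^{-1}(\{x\})$; equivalently, a cover in which each loop has either all lifts closed or none is regular. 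Both arguments hinge on the same key point, which you correctly isolate and which the paper uses only tacitly: the forward direction of Lemma 3.3 is uniform over the fibre (the sheet argument works at every $\ti{x}\in p^{-1}(\{x\})$), and this is exactly what legitimizes the paper's concatenation $\wt{\bt}*\wt{\al}*\wt{\bt}^{-1}$, where the closed lift must be the one based at $\wt{\bt}(1)$ and not just some closed lift. What your route buys: it exposes the structural reason for normality (smallness is a property of the loop downstairs, independent of the fibre point, so the cover is forced to be regular) and yields Proposition 3.6 as a byproduct. What the paper's route buys: it is shorter and self-contained, needing neither the conjugacy-class theorem nor path connectedness of $\wt{X}$; in fact the only direction of that theorem you use, namely $g^{-1}Hg=p_*\pi_1(\wt{X},\wt{\bt}(1))$ for $g=[\bt]$, requires nothing but path lifting, so your worry about path connectedness can be dropped --- and unwinding that citation reproduces precisely the paper's computation.
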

\begin{proof}
Let $[\al]\in p_*\pi_1(\wt{X},\ti{x})$ and $[\bt]\in\pi_1(X,x)$. There exists a closed lift $\wt{\al}$ of $\al$. Let $\wt{\bt}$ be the lift of $\bt$ with initial point $\ti{x}$. If $\gamma$ is the lift of $\bt^{-1}$ with initial point $\wt{\bt}(1)$, then $\gamma=\wt{\bt}^{-1}$. Hence $\wt{\bt}*\wt{\al}*\wt{\bt}^{-1}$ is a loop and $p\circ(\wt{\bt}*\wt{\al}*\wt{\bt}^{-1})=\bt*\al*\bt^{-1}$ which implies that $\bt*\al*\bt^{-1}$ is small since it has a closed lift. Therefore $p_*\pi_1(\wt{X},\ti{x})$ is a normal subgroup of $\pi_1(X,x)$.
\end{proof}

\begin{proposition}
If $X$ has a small covering $p:\wt{X}\lo X$, then $\pi_1^s(X,x)=\pi_1^{sg}(X,x)=p_*\pi_1(\wt{X},\ti{x})$.
\end{proposition}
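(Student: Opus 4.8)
The plan is to invoke Theorem 2.1 together with Lemma 3.3 and do essentially nothing else. Theorem 2.1 already supplies the ascending chain $\pi_1^s(X,x)\leq\pi_1^{sg}(X,x)\leq p_*\pi_1(\wt{X},\tilde{x})$ for \emph{any} cover of $X$, so to force all three subgroups to coincide it is enough to prove the single reverse inclusion $p_*\pi_1(\wt{X},\tilde{x})\leq\pi_1^s(X,x)$. Closing this one gap collapses the whole chain and yields the desired triple equality.

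To establish that reverse inclusion I would take an arbitrary $[\al]\in p_*\pi_1(\wt{X},\tilde{x})$, which by definition means $[\al]=p_*[\wt{\al}]$ for some loop $\wt{\al}$ based at $\tilde{x}$ in $\wt{X}$. The key observation is that $\wt{\al}$ is then, tautologically, a \emph{closed} lift of the loop $\al=p\circ\wt{\al}$. Here is where the small covering hypothesis does all the work: since $\wt{X}$ is a small loop space, Lemma 3.3 characterizes smallness precisely by the existence of a closed lift. Applying it to $\al$, whose lift $\wt{\al}$ is closed, gives that $\al$ is small, i.e. $[\al]\in\pi_1^s(X,x)$. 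This is exactly the inclusion $p_*\pi_1(\wt{X},\tilde{x})\leq\pi_1^s(X,x)$, and combining it with the chain of Theorem 2.1 finishes the proof.

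I expect no genuine obstacle: the entire substance of the statement has been pre-packaged into Lemma 3.3 (smallness $\Leftrightarrow$ closed lift) and Theorem 2.1 (the inclusions valid for every cover), so the proposition is really a two-line corollary of the two. The only point deserving a moment of care is recognizing that an element of $p_*\pi_1(\wt{X},\tilde{x})$ is represented by an \emph{honest loop} in $\wt{X}$, not merely by a path, so that its projection automatically comes equipped with the closed lift required to enter the hypothesis of Lemma 3.3; once that is noted the argument is immediate.
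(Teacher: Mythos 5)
Your proposal is correct and takes essentially the same approach as the paper: both reduce the proposition to the single reverse inclusion $p_*\pi_1(\wt{X},\ti{x})\leq\pi_1^s(X,x)$ and then close the chain supplied by Theorem 2.1. The only difference is cosmetic: you invoke the closed-lift criterion of Lemma 3.3, while the paper inlines the identical argument (replace the lifted loop by a homotopic loop inside a sheet $V$ over an evenly covered $U$, then project), which is exactly how the converse direction of Lemma 3.3 is proved in the first place.
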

\begin{proof}
Assume $[\al]=[p\circ\bt]$ for $[\bt]\in\pi_1(\wt{X},\ti{x})$, $U$ is an evenly covered open subset of $X$ containing $x$ and $V$ is the homeomorphic copy of $U$ in $\wt{X}$ containing $\ti{x}$. Since $\wt{X}$ is a small loop space, there exists a loop $\bt_V$ in $V$ such that $\bt\sq\bt_V$. Hence $\al\sq\al_U:=p\circ\bt_V$. Therefore $p_*\pi_1(\wt{X},\ti{x})\leq\pi_1^s(X,x)$ and by Theorem 2.1 the result
\end{proof}
\begin{corollary}
If $X$ has a small covering, then $\pi_1^s(X,x)$ is a normal subgroup of $\pi_1(X,x)$, for every $x\in X$.
\end{corollary}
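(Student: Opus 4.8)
The plan is to read this off directly from the two results that immediately precede it. First I would invoke the Proposition just above, which asserts that whenever $X$ admits a small covering $p:\wt{X}\lo X$ the three subgroups coincide, $\pi_1^s(X,x)=\pi_1^{sg}(X,x)=p_*\pi_1(\wt{X},\ti{x})$. In particular $\pi_1^s(X,x)$ is literally the same subgroup of $\pi_1(X,x)$ as the image $p_*\pi_1(\wt{X},\ti{x})$. Next I would quote the earlier theorem, which says that for a small covering this image $p_*\pi_1(\wt{X},\ti{x})$ is a normal subgroup of $\pi_1(X,x)$. Since normality is a property of a subgroup and the two subgroups are equal (not merely isomorphic), the conclusion that $\pi_1^s(X,x)$ is normal in $\pi_1(X,x)$ follows at once.

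Alternatively, one can give a self-contained argument that bypasses the Proposition and instead reuses the mechanism of the normality theorem together with the small-covering Lemma. Given $[\al]\in\pi_1^s(X,x)$, the Lemma guarantees a closed lift $\wt{\al}$ of $\al$. For an arbitrary $[\bt]\in\pi_1(X,x)$, I would lift $\bt$ to $\wt{\bt}$ starting at $\ti{x}$ and, exactly as in the proof of the normality theorem, form $\wt{\bt}*\wt{\al}*\wt{\bt}^{-1}$, which is a closed lift of $\bt*\al*\bt^{-1}$. Applying the Lemma in the reverse direction then shows that $\bt*\al*\bt^{-1}$ is small, so $[\bt*\al*\bt^{-1}]\in\pi_1^s(X,x)$ and normality follows directly.

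There is no real obstacle here; this is a formal corollary. The single point requiring care is that all the subgroups in play sit inside the one ambient group $\pi_1(X,x)$ with the common base point $x$, so that the equalities furnished by the Proposition are honest equalities of subgroups and normality transports without any further argument. Since both the Proposition and the normality theorem are stated for a fixed base point and the same covering map, this is automatic, and I expect the write-up to amount to a single sentence combining the two cited results.
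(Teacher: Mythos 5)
Your proposal is correct and matches the paper's intent exactly: the corollary is stated without proof precisely because it is the immediate combination of Proposition 3.6 (which gives the equality of subgroups $\pi_1^s(X,x)=p_*\pi_1(\wt{X},\ti{x})$) with Theorem 3.5 (normality of $p_*\pi_1(\wt{X},\ti{x})$ in $\pi_1(X,x)$), which is your first argument. Your alternative direct argument via closed lifts is also sound, as it simply re-runs the proof of Theorem 3.5 with the small-loop characterization of Lemma 3.3, but it is not needed.
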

We denote $\mathcal{COV}$(X) to be the category of all coverings of $X$ as objects and covering maps between them as morphisms.
\begin{theorem}
A small covering of space $X$ is the universal object in the category $\mathcal{COV}$(X).
\end{theorem}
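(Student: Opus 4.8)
The plan is to establish the universal (initial) property directly from the lifting criterion for coverings, the decisive input being the subgroup equality in Proposition 3.6 combined with Theorem 2.1. Let $q:\wt{Y}\lo X$ be an arbitrary object of $\mathcal{COV}(X)$ with $\wt{Y}$ path connected, fix $x\in X$ and base points $\ti{x}\in p^{-1}(x)$, $\ti{y}\in q^{-1}(x)$. Since a morphism $p\to q$ in $\mathcal{COV}(X)$ is precisely a covering $f:\wt{X}\lo\wt{Y}$ with $q\circ f=p$, it suffices to produce one such $f$ (and, once $f(\ti{x})=\ti{y}$ is prescribed, it will be unique).

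First I would check the subgroup condition needed to lift $p$ through $q$. By Proposition 3.6 we have $p_*\pi_1(\wt{X},\ti{x})=\pi_1^{sg}(X,x)$, and by Theorem 2.1 applied to the cover $q$ we have $\pi_1^{sg}(X,x)\leq q_*\pi_1(\wt{Y},\ti{y})$; hence $p_*\pi_1(\wt{X},\ti{x})\subseteq q_*\pi_1(\wt{Y},\ti{y})$. Regarding $p$ as a map $\wt{X}\lo X$ to be lifted across the covering $q$, the classical lifting theorem (\cite{S}) then furnishes a unique continuous $f:(\wt{X},\ti{x})\lo(\wt{Y},\ti{y})$ with $q\circ f=p$. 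This invokes that $\wt{X}$ is connected and locally path connected: connectedness holds since a small loop space is path connected, and local path connectedness is inherited from $X$ (retained here, as in the classical theory) because $p$ is a local homeomorphism.

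Next I would upgrade the continuous lift $f$ to a covering. Surjectivity follows from path lifting: for $\ti{y}'\in\wt{Y}$ take a path from $\ti{y}$ to $\ti{y}'$, project it by $q$ and lift it through $p$ starting at $\ti{x}$; the $f$-image of this lift is, by unique path lifting, the original path, so $\ti{y}'$ lies in the image of $f$. For the evenly covered neighborhoods, fix $\ti{y}'$, put $y'=q(\ti{y}')$, and choose a path-connected open $U\ni y'$ that is evenly covered by both $p$ and $q$. Letting $V'$ be the sheet over $U$ in $\wt{Y}$ containing $\ti{y}'$, each sheet $V$ over $U$ in $\wt{X}$ is connected, so $f(V)$ lands in a single sheet over $U$, and whenever $f(V)=V'$ the restriction $f|_V=(q|_{V'})^{-1}\circ(p|_V)$ is a homeomorphism onto $V'$; thus $f^{-1}(V')$ is the disjoint union of exactly these sheets, each mapped homeomorphically onto $V'$. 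Hence $V'$ is evenly covered, $f$ is a covering, and $p$ maps to every object of $\mathcal{COV}(X)$.

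The \emph{main obstacle} is this last promotion of $f$ to a covering rather than the lifting itself: the lifting theorem yields only continuity, and the delicate point is securing a path-connected neighborhood of $y'$ that is simultaneously evenly covered by both $p$ and $q$ in a setting that is deliberately not locally nice. It is exactly here that the small-covering hypothesis does real work — through Proposition 3.6 it forces $p_*\pi_1(\wt{X},\ti{x})$ down to the minimal subgroup $\pi_1^{sg}(X,x)$, which by Theorem 2.1 sits inside every $q_*\pi_1(\wt{Y},\ti{y})$, so the required lift exists for every cover $q$ and not merely for some of them.
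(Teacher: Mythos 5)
Your proof is correct and takes essentially the same route as the paper: Proposition 3.6 gives $p_*\pi_1(\wt{X},\ti{x})=\pi_1^{sg}(X,x)$, Theorem 2.1 applied to $q$ gives $\pi_1^{sg}(X,x)\leq q_*\pi_1(\wt{Y},\ti{y})$, and the classical lifting criterion then produces $f$ with $q\circ f=p$. The only difference is that you additionally verify that the lift $f$ is itself a covering map (surjectivity via path lifting, evenly covered neighborhoods via sheets over sets evenly covered by both $p$ and $q$) under the standing local path connectedness assumptions --- a detail the paper's one-line proof leaves implicit, since its definition of universal object does require the morphism $f$ to be a covering.
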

\begin{proof}
Assume $p:\wt{X}\lo X$ is a small cover of $X$ and $q:\wt{Y}\lo X$ is another covering, then by Proposition 3.6 $p_*\pi_1(\wt{X})=\pi_1^{sg}(X)\leq q_*\pi_1(\wt{Y})$ which implies that there exists $f:\wt{X}\lo\wt{Y}$ such that $q\circ f=p$.
\end{proof}
\section{Existence}
We now consider the following natural question: \emph{Given a non-homotopically Hausdorff space $X$, does
$X$ have a small covering space?} First, we derive a rather simple necessary
condition. Let $(\wt{X}, p)$ be a small covering space of $X$, $x$ be an arbitrary
point of $X$, $\ti{x}\in p^{-1}(\{x\})$, $U$ be an evenly covered open neighborhood of $x$
and let $V$ be the homeomorphic copy of $U$ in $p^{-1}(U)$ which contains the point $\ti{x}$. We then
have the following commutative diagram involving fundamental groups:
$$\xymatrix{
\pi_1(V,\ti{x}) \ar[r]^{j_*} \ar[d]_{(p|_V)_*}
& \pi_1(\wt{X},\ti{x})\ar[d]^{p_*}  \\
\pi_1(U,x) \ar[r]^{i_*} & \pi_1(X,x).  }$$

Obviously $\pi_1^s(X,x)\sub i_*\pi_1(U,x)$. Let $\al$ be a loop at $x$ in $U$ such that $i_*([\al])\neq 1$ and let $\wt{\al}=p|_V^{-1}\circ\al$. Since $(p|_V)_*$ is isomorphism and $p_*$ is injective, commutativity of diagram implies that $j_*([\wt{\al}])\neq 1$. Therefore $\wt{\al}$ is a nontrivial small loop in $\wt{X}$ and since $p_*\pi_1(\wt{X},\ti{x})=\pi_1^s(X,x)$, $[\al]\in\pi_1^s(X,x)$. Thus we conclude that the space $X$ has the following
property: Every point $x\in X$ has a neighborhood $U$ such that
$i_*\pi_1(U,x)=\pi_1^s(X,x)$. We call a space with this property a semi-locally
small loop space. Since for every $y\in X$, we have $i_*\pi_1(U,y)=\pi_1^s(X,y)$, this definition can also be rephrased as follows:
\begin{definition}
A space $X$ is a semi-locally small loop space if and only if for each $x\in X$ there exists an open neighborhood $U$ of $x$ such that $i_*\pi_1(U,y)=\pi_1^s(X,y)$, for all $y\in U$, where $i:U\lo X$ is the inclusion.
\end{definition}
In the sequel, all the spaces are path connected.
\begin{lemma}
If $X$ is a semi-locally small loop space, then $\pi_1^{sg}(X,x)=\pi_1^s(X,x)$, for every $x\in X$. Also $\pi_1^s(X,x)\cong\pi_1^s(X,y)$, for every $x,y\in X$.
\end{lemma}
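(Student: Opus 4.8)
The plan is to establish the first assertion $\pi_1^{sg}(X,x)=\pi_1^s(X,x)$ and then deduce the second from it. Since the inclusion $\pi_1^s(X,x)\leq\pi_1^{sg}(X,x)$ holds by definition, I only need the reverse inclusion, for which it suffices to show that each generator $[\al*\bt*\al^{-1}]$ of $\pi_1^{sg}(X,x)$ --- where $\al$ is a path with $\al(0)=x$ and $[\bt]\in\pi_1^s(X,\al(1))$ --- is itself a small loop at $x$, and hence lies in the subgroup $\pi_1^s(X,x)$.

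The local engine of the argument is the following observation. Suppose $W$ is one of the neighborhoods furnished by the semi-locally small loop hypothesis, so that $i_*\pi_1(W,z)=\pi_1^s(X,z)$ for every $z\in W$. If $a,b\in W$, $\delta$ is a path in $W$ from $a$ to $b$, and $\bt$ is a small loop at $b$, then $\delta*\bt*\delta^{-1}$ is a small loop at $a$. Indeed, smallness of $\bt$ gives a representative $\bt'$ of $[\bt]$ lying in the neighborhood $W$ of $b$; then $\delta*\bt'*\delta^{-1}$ is a loop in $W$ based at $a$, so its class lies in $i_*\pi_1(W,a)=\pi_1^s(X,a)$, and since $\delta*\bt*\delta^{-1}\sq\delta*\bt'*\delta^{-1}$ in $X$, the loop $\delta*\bt*\delta^{-1}$ is small at $a$. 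Note that this is exactly where the full strength of the definition --- the equality holding for \emph{all} $y\in W$, not merely the centre --- is used.

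With this in hand I would run a compactness argument along $\al$. Covering $\al([0,1])$ by finitely many such good neighborhoods and choosing a partition $0=t_0<\cdots<t_n=1$ with $\al([t_{i-1},t_i])$ contained in a good neighborhood $W_i$, I factor $\al\sq\al_1*\cdots*\al_n$ with $\al_i$ a path in $W_i$ from $p_{i-1}:=\al(t_{i-1})$ to $p_i:=\al(t_i)$. Applying the local observation from the right end inward --- first to $\al_n*\bt*\al_n^{-1}$, which is small at $p_{n-1}$, then conjugating the resulting small loop by $\al_{n-1}$, and so on --- I conclude by induction that $\al*\bt*\al^{-1}$, being homotopic to the fully conjugated loop, is small at $p_0=x$. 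Since $\pi_1^s(X,x)$ is a subgroup containing every generator of $\pi_1^{sg}(X,x)$, this yields $\pi_1^{sg}(X,x)\leq\pi_1^s(X,x)$ and hence the desired equality. The inductive compactness step here is the only real obstacle; once the local observation is correctly set up, the rest is bookkeeping.

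For the second assertion, fix a path $\gamma$ from $x$ to $y$ and consider the usual change-of-basepoint isomorphism $\pi_1(X,y)\lo\pi_1(X,x)$, $[\la]\mapsto[\gamma*\la*\gamma^{-1}]$. If $[\la]\in\pi_1^s(X,y)$ then $[\gamma*\la*\gamma^{-1}]$ is, by construction, a generator of $\pi_1^{sg}(X,x)$, which by the first part equals $\pi_1^s(X,x)$; so this isomorphism carries $\pi_1^s(X,y)$ into $\pi_1^s(X,x)$. Running the same argument with $\gamma^{-1}$ shows the inverse isomorphism carries $\pi_1^s(X,x)$ into $\pi_1^s(X,y)$, whence the restriction is an isomorphism $\pi_1^s(X,y)\cong\pi_1^s(X,x)$.
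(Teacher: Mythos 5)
Your proposal is correct and follows essentially the same route as the paper: the paper also reduces to generators $[\al*\bt*\al^{-1}]$, subdivides $\al$ via a Lebesgue number into segments lying in good neighborhoods $U_n$, and inductively pushes the small loop from $\al(1)$ back toward $x$ using the ``for all $y\in U$'' clause, then gets the second assertion from basepoint-change invariance. Your only departures are cosmetic ones that arguably improve the exposition: you isolate the inductive step as a standalone local observation (conjugating within a single $W_i$ rather than pushing representatives into intersections $U_n\cap U_{n+1}$ as the paper does), and you spell out the basepoint-change argument that the paper compresses into one line.
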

\begin{proof}
For every $x\in X$ we have $\pi_1^s(X,x)\leq\pi_1^{sg}(X,x)$. For converse, assume that $[\al*\bt*\al^{-1}]\in\pi_1^{sg}(X,x)$, where $[\bt]\in \pi_1^s(X,\al(1))$.

For every $t\in I$, let $U_t$ be an open neighborhood of $\al(t)$ such that $i_*\pi_1(U_t,y)=\pi_1^s(X,y)$, for all $y\in U_t$. The sets $\al^{-1}(U_t)$, $t\in I$, form an open cover of $I$. Let $\la>0$ be the Lebesgue number for this cover. Choose $N\in\mathbb{N}$ so that $1/N<\la$. For
each $1<n\leq N$ let $$I_n=[\fr{n-1}{N},\fr{n}{N}]\sub I.$$
Reindex the $U_t$'s so that
$$\al(I_n)\sub U_n\ \text{for each}\  1<n\leq N.$$
The $U_n$'s are not necessarily distinct, nor does the proof require this condition.
For each $1\leq n\leq N$, denote $y_n=\al(\fr{n}{N})$, $y_0=x$ and $\al_n=\al|_{I_n}\circ\vf_n$, where $\vf_n:I\lo I_n$ is the linear homeomorphism. Since $\bt\in\pi_1^s(X,y_N)=i_*\pi_1(U_N,y_N)$, there exists a loop $\bt_N:I\lo U_N$ such that $\bt_N\sq\bt$. Then ${\al_N}*\bt_N*{\al_N}^{-1}$ is a loop in $U_N$ at $y_{N-1}\in U_{N-1}\cap U_N$. Hence there exists a loop $\bt_{N-1}:I\lo U_{N-1}\cap U_N$ such that $\bt_{N-1}\sq{\al_N}*\bt_N*{\al_N}^{-1}$ since $i_*\pi_1(U_N,y_{N-1})=\pi_1^s(X,y_{N-1})$. Similarly, for every $1\leq n\leq N$, there is $\bt_n:I\lo U_n\cap U_{n+1}$ such that $\bt_n\sq{\al_{n+1}}*\bt_{n+1}*{\al_{n+1}}^{-1}$. Therefore we have
$$ \al*\bt*\al^{-1}\sq{\al_1}*{\al_2}*...*{\al_N}*\bt*{\al_N}^{-1}*...*{\al_1}^{-1}$$
$$\sq{\al_1}*{\al_2}*...*{\al_N}*\bt_N*{\al_N}^{-1}*...*{\al_1}^{-1}$$
$$\sq{\al_1}*...*{\al_{N-1}}*\bt_{N-1}*{\al_{N-1}}^{-1}*...*{\al_1}^{-1}\sq...\sq{\al_1}*\bt_1*{\al_1}^{-1}.$$
Since $i_*\pi_1(U_1,x)=\pi_1^s(X,x)$, there exists a loop $\al'\in\pi_1^s(X,x)$ such that $\al_1*\bt_1*\al_1^{-1}\sq\al'$ which implies $[\al*\bt*\al^{-1}]\in\pi_1^s(X,x)$ and hence the first assertion holds. Since $X$ is path connected, $\pi_1^{sg}(X,x)\cong\pi_1^{sg}(X,y)$ for every $x,y\in X$ and hence the second assertion holds.
\end{proof}
\begin{lemma}
Let $X$ be a semi-locally small loop space. If for at least one point $x\in X$ there exists a covering $p:\wt{X}\lo X$ such that $p_*\pi_1(\wt{X},\ti{x})=\pi_1^s(X,x)$, then $\wt{X}$ is a small loop space.
\end{lemma}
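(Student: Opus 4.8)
The plan is to prove the statement directly from the definition: I will show that every loop in $\wt{X}$ based at an arbitrary point is small, i.e. admits a representative inside every neighborhood of its base point. Fix $\ti{z}\in\wt{X}$ with $z=p(\ti{z})$ and an arbitrary loop $\wt{\gamma}$ at $\ti{z}$, and write $\gamma=p\circ\wt{\gamma}$. Two ingredients are needed: (i) that $\gamma$ is itself a \emph{small} loop in $X$, so that it can be squeezed into arbitrarily small neighborhoods of $z$; and (ii) that such a small representative lifts back into a prescribed neighborhood of $\ti{z}$ without leaving the homotopy class of $\wt{\gamma}$. Granting (i), the second part is the familiar even-covering argument and is essentially routine.

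For (i) the substantive work is to promote the hypothesis, which is assumed only at the single point $x$, to the inclusion $p_*\pi_1(\wt{X},\ti{z})\sub\pi_1^s(X,z)$ at \emph{every} $\ti{z}$. First I would join $\ti{x}$ to $\ti{z}$ by a path $\wt{\sigma}$ in the (path-connected) space $\wt{X}$ and set $\sigma=p\circ\wt{\sigma}$, a path from $x$ to $z$. Given $[\omega]\in p_*\pi_1(\wt{X},\ti{z})$, say $\omega=p\circ\wt{\omega}$, the loop $\wt{\sigma}*\wt{\omega}*\wt{\sigma}^{-1}$ is based at $\ti{x}$, so $[\sigma*\omega*\sigma^{-1}]\in p_*\pi_1(\wt{X},\ti{x})=\pi_1^s(X,x)$ by hypothesis. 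Writing $[\omega]=[\sigma^{-1}*(\sigma*\omega*\sigma^{-1})*\sigma]$ and reading $\sigma^{-1}$ as a path issuing from $z$ with endpoint $x$, this exhibits $[\omega]$ as one of the generators of $\pi_1^{sg}(X,z)$; by Lemma 4.2 (available precisely because $X$ is semi-locally small loop) $\pi_1^{sg}(X,z)=\pi_1^s(X,z)$, whence $[\omega]\in\pi_1^s(X,z)$. Together with the reverse inclusion from Theorem 2.1 this in fact gives $p_*\pi_1(\wt{X},\ti{z})=\pi_1^s(X,z)$ for every $\ti{z}$, though only $\sub$ is needed here. In particular $[\gamma]=p_*[\wt{\gamma}]\in\pi_1^s(X,z)$, so $\gamma$ is small.

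It then remains to carry a small representative up into $\wt{X}$. Given any neighborhood $\wt{W}$ of $\ti{z}$, I would choose an evenly covered neighborhood $U$ of $z$ whose distinguished sheet $V$ over $\ti{z}$ is contained in $\wt{W}$ (take any evenly covered neighborhood, intersect its sheet through $\ti{z}$ with $\wt{W}$, and push the result down by $p$; subsets of evenly covered sets are evenly covered). Since $\gamma$ is small it has a representative $\gamma_U$ inside $U$, and $\wt{\gamma_U}:=(p|_V)^{-1}\circ\gamma_U$ is then a loop at $\ti{z}$ lying in $V\sub\wt{W}$. Finally, because $p\circ\wt{\gamma_U}=\gamma_U\sq\gamma=p\circ\wt{\gamma}$ rel endpoints and both lifts begin at $\ti{z}$, the homotopy lifting property forces $\wt{\gamma_U}\sq\wt{\gamma}$, so $\wt{\gamma}$ has a representative in $\wt{W}$. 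As $\wt{W}$, $\wt{\gamma}$ and $\ti{z}$ were arbitrary, $\wt{X}$ is a small loop space. The main obstacle is genuinely the base-point transport in (i): the hypothesis is supplied at one point only, and it is exactly the semi-local small loop condition, channeled through the identity $\pi_1^{sg}=\pi_1^s$ of Lemma 4.2, that keeps the conjugate $[\sigma^{-1}*(\sigma*\omega*\sigma^{-1})*\sigma]$ inside the small loop group at the new base point.
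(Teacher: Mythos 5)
Your proposal is correct and takes essentially the same route as the paper: first transport the hypothesis $p_*\pi_1(\wt{X},\ti{x})=\pi_1^s(X,x)$ to every fiber point by path conjugation, using Lemma 4.2 (the identity $\pi_1^{sg}=\pi_1^s$ for semi-locally small loop spaces) to keep the conjugate inside the small loop group, and then use an evenly covered neighborhood whose sheet through the base point lies in the prescribed neighborhood, together with homotopy lifting, to produce a small representative of an arbitrary loop in $\wt{X}$. Your choice of the evenly covered set (shrinking the sheet through $\ti{z}$ into $\wt{W}$ and pushing down by $p$) is in fact slightly more careful than the paper's condition $U\sub p(\wt{U})$, but the argument is the same.
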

\begin{proof}
First we show that for every $y\in X$ and $\ti{y}\in p^{-1}(\{y\})$, $p_*\pi_1(\wt{X},\ti{y})=\pi_1^s(X,y)$. Let $\wt{\al}$ be a path from $\ti{x}$ to $\ti{y}$ and $\al=p\circ\wt{\al}$, then $\theta:p_*\pi_1(\wt{X},\ti{y})\lo p_*\pi_1(\wt{X},\ti{x})$ by $\theta([\bt])=[\al*\bt*\al^{-1}]$ is an isomorphism. Also, by Lemma 4.2 $\eta:\pi_1^s(X,x)\lo\pi_1^s(X,y)$ by $\eta([\gamma])=[\al^{-1}*\gamma*\al]$ is an isomorphism. Since $p_*\pi_1(\wt{X},\ti{x})=\pi_1^s(X,x)$, $\eta\circ\theta$ is well defined and $\eta\circ\theta([\bt])=[\bt]$, which implies that $p_*\pi_1(\wt{X},\ti{y})=\pi_1^s(X,y)$, for every $y\in X$.

Now let $\wt{\gamma}:I\lo\wt{X}$ be a loop at $\ti{y}$ and $\wt{U}$ be an open neighborhood of $\ti{y}$, we show that $[\wt{\gamma}]$ has a representation in $\wt{U}$. For this, let $U$ be an evenly covered open neighborhood of $y=p(\ti{y})$ such that $U\sub p(\wt{U})$ and $i_*\pi_1(U,z)=\pi_1^s(X,z)$, for every $z\in U$. Since $\gamma=p\circ\wt{\gamma}\in p_*\pi_1(\wt{X},\ti{y})=\pi_1^s(X,y)$, there exists a loop $\gamma_U:I\lo U$ at $y$ such that $\gamma\sq\gamma_U$. If $V\sub\wt{U}$ is a homeomorphic copy of $U$ in $\wt{X}$ which contains $\ti{y}$, then $\wt{\gamma}_U=p|_V^{-1}\circ\gamma_U$ is a loop at $\ti{y}$ in $V$ and $\wt{\gamma}\sq\wt{\gamma}_U$, as desired.
\end{proof}

Now we are in a position to state and prove the main result of the paper.
\begin{theorem}
A locally path connected topological space $X$ has a small covering space if and only if $X$ is a semi-locally small loop space.
\end{theorem}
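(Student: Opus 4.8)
The forward implication is essentially already in hand: the computation preceding Definition 4.1 shows that a small covering of $X$ forces every point $x$ to possess a neighborhood $U$ with $i_*\pi_1(U,x)=\pi_1^s(X,x)$, and the remark accompanying it upgrades this to the condition defining a semi-locally small loop space. So I would dispatch the \emph{only if} direction by appealing to that discussion and spend the real effort on the converse.

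For the converse, assume $X$ is locally path connected (and path connected, as standing throughout this section) and semi-locally small loop. I would fix a basepoint $x$, set $H=\pi_1^s(X,x)$, and record via Lemma 4.2 the decisive algebraic input $H=\pi_1^{sg}(X,x)$. I would also observe that $\pi_1^{sg}(X,x)$ is normal in $\pi_1(X,x)$: conjugating a generator $[\al*\bt*\al^{-1}]$, with $[\bt]\in\pi_1^s(X,\al(1))$, by an arbitrary $[\delta]\in\pi_1(X,x)$ produces $[(\delta*\al)*\bt*(\delta*\al)^{-1}]$, and since $\delta*\al$ is again a path from $x$ to $\al(1)$, this is once more a generator of $\pi_1^{sg}(X,x)$. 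Thus $H$ is the normal subgroup I aim to realize as $p_*\pi_1(\wt X,\ti x)$.

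I would then run the classical construction of the covering associated with the subgroup $H$: take $\wt X$ to be the set of classes $\lk\gamma\rg$ of paths $\gamma$ starting at $x$, with $\gamma\sim\delta$ iff $\gamma(1)=\delta(1)$ and $[\gamma*\delta^{-1}]\in H$; put $p(\lk\gamma\rg)=\gamma(1)$; and topologize $\wt X$ by the basic sets $\lk\gamma,U\rg=\{\,\lk\gamma*\eta\rg : \eta \text{ a path in } U \text{ issuing from } \gamma(1)\,\}$, where $U$ ranges over path-connected open subsets of $X$. Local path connectedness guarantees these sets form a basis, and the crux is to verify that the neighborhoods $U$ furnished by the semi-locally small loop hypothesis are evenly covered. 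This is the main obstacle, and the only point at which the hypothesis is used. The mechanism is that for a loop $\eta$ in such a $U$ based at a point $y\in U$ one has $[\eta]\in i_*\pi_1(U,y)=\pi_1^s(X,y)$, so that for any path $\gamma$ from $x$ to $y$ the class $[\gamma*\eta*\gamma^{-1}]$ is a generator of $\pi_1^{sg}(X,x)=H$. Feeding this into the standard verification renders the sheets $\lk\gamma,U\rg$ pairwise disjoint, each carried homeomorphically onto $U$ by $p$, precisely as semi-local simple connectivity does in the locally nice case. Hence $p:\wt X\lo X$ is a genuine covering, and the construction delivers $p_*\pi_1(\wt X,\ti x)=H=\pi_1^s(X,x)$.

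With this last identity in place, the hypotheses of Lemma 4.3 are satisfied at the basepoint $x$, so I would invoke it to conclude that $\wt X$ is a small loop space. Consequently $p:\wt X\lo X$ is a small covering of $X$, which settles the converse and completes the proof.
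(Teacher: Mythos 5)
Your proposal is correct and takes essentially the same route as the paper: both run the classical path-space construction on classes of paths from $x$ with $\lk\gamma\rg=\lk\delta\rg$ iff $[\gamma*\delta^{-1}]\in\pi_1^s(X,x)$, both use Lemma 4.2 (the identity $\pi_1^{sg}(X,x)=\pi_1^s(X,x)$) as the decisive input making the semi-locally-small-loop neighborhoods evenly covered, and both finish by computing $p_*\pi_1(\wt{X},\ti{x})=\pi_1^s(X,x)$ and invoking Lemma 4.3. Your extra remark on the normality of $\pi_1^{sg}(X,x)$ is harmless but not needed; the paper's verification uses only the equality from Lemma 4.2.
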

\begin{proof}
If $X$ has a small covering space, by the argument at the beginning of this section, the result holds. Conversely, assume $X$ is a semi-locally small loop space.
Choose a base point $x\in X$ and let $P(X,x)$ be the family of all paths $\al$ in $X$ with $\al(0)=x$. We define an equivalent relation $\sim$ on $P(X,x)$ as follows. For any $\al_1,\al_2\in P(X,x)$, $\al_1\sim\al_2$ if and only if $\al_1$ and $\al_2$ have the same end point and $\al_1*\al_2^{-1}$ is a small loop. We denote the equivalence class of $\al$ by $\lk\al\rg$. Define $\wt{X}$ to be the set of all equivalence classes of paths $\al\in P(X,x)$. Define a function
$p:\wt{X}\lo X$ by setting $p(\lk\al\rg)$ equal to the terminal point of the path class $\al$. We
shall now show how to topologize $\wt{X}$ so that it is a small loop space
and $(\wt{X}, p)$ is a covering space of $X$.

Observe that our hypotheses imply that the topology on $X$ has a basis
consisting of open sets $U$ with the following properties: $U$ is path connected
and every loop in $U$ is small.
For brevity let us agree to call such an open set $U$ basic. Note that, if $x$ and
$y$ are any two points in a basic open set $U$, then for any two paths $f$ and $g$ in $U$
with initial point $x$ and terminal point $y$, $f*g^{-1}$ is small loop.

Given any path $\al\in\wt{X}$ and any basic open set $U$ which contains the end
point $p(\lk\al\rg)$, denote by $(\lk\al\rg,U)$ the set of all equivalence classes  $\lk\bt\rg$ such that for some path
class $[\al']$ with $Im(\al')\sub U$, $\bt=\al*\al'$. Then $(\lk\al\rg,U)$ is a subset of $\wt{X}$. We topologize $\wt{X}$ by
choosing the family of all such sets $(\lk\al\rg,U)$ as a basis of open sets. In
order that the family of all sets of the form $(\lk\al\rg,U)$ can be a basis for some
topology on $\wt{X}$, it is enough to show that if $\gamma\in(\lk\al\rg,U)\cap(\lk\bt\rg,V)$, then there exists a basic open set $W$ such that $(\lk\gamma\rg,W)\sub(\lk\al\rg,U)\cap(\lk\bt\rg,V)$.
For this, we choose $W$ to be any basic
open set such that $p(\gamma)\in W\sub U\cap V$.

Before proceeding with the proof that $(\wt{X}, p)$ is a small covering space
of $X$, it is convenient to make the following two observations:\\
(a) Let $\al\in\wt{X}$, and let $U$ be a basic open neighborhood of $p(\lk\al\rg)$. Then $p|_{(\lk\al\rg, U)}$
is a one-to-one map of $(\lk\al\rg, U)$ onto $U$.\\
(b) Let $U$ be any basic open set, and let $y$ be any point of $U$. Then,
$$p^{-1}(U)=\bigcup_{\lambda}(\lk\al_{\lambda}\rg, U),$$
where $\lk\al_{\lambda}\rg$ denotes
the totality of all path classes in $X$ with initial point
$x$ and terminal point $y$. Moreover, the sets $(\lk\al_{\lambda}\rg,U)$ are pairwise disjoint.\\
For proving (a) suppose that $\ti{y},\ti{z}\in(\lk\al\rg,U)$ and $p(\ti{y})=p(\ti{z})$. Now $\ti{z}=\lk\al*\mu\rg$, where $\mu(0)=\al(1)=x_1$ and $\mu(I)\sub U$; similarly, $\ti{y}=\lk\al\eta\rg$, where $\eta(0)=x_1$ and $\eta(I)\sub U$. Since $p(\ti{y})=p(\ti{z})$, we have $\eta(1)=\mu(1)$, so that $\eta*\mu^{-1}$ is a closed path in $U$ at $x_1$. By the choice of $U$, $\eta*\mu^{-1}$ is a small loop in $X$. Hence $[\al*\eta*\mu^{-1}*\al^{-1}]\in\pi_1^{sg}(X,x)$ which implies that $\al*\eta*\mu^{-1}*\al^{-1}$ is a small loop at $x$, since $\pi_1^{sg}(X,x)=\pi_1^s(X,x)$ by lemma 4.2 . Therefore $\lk\al*\eta\rg=\lk\al*\mu\rg$, that is, $\ti{y}=\ti{z}$. Surjectivity of $p|_{(\lk\al\rg, U)}$ follows from path connectedness of $U$.

Clearly, $p^{-1}(U)$ contains $\bigcup_{\lambda}(\lk\al_{\lambda}\rg, U)$. For the reverse inclusion, let $\ti{y}\in \wt{X}$ such that $p(\ti{y})\in U$, that is, $\ti{y}=\lk\al\rg$ and $\al(1)\in U$. Since $U$ is path connected, there is a path $\la$ in $U$ from $\al(1)$ to $y$. Then $\lk\al_{\la}\rg=\lk\al*\la\rg$ lies in the fiber over $y$. Since $\lk\al\rg=\lk\al*\la*\la^{-1}\rg$ and $\lk\al*\la*\la^{-1}\rg\in (\lk\al*\la\rg,U)$, $\lk\al\rg\in (\lk\al_{\la}\rg,U)$. Therefore (b) holds.

Note that it follows from (b) that p is continuous. Hence, $p|_{(\lk\al\rg, U)}$ is a
one-to-one continuous map from $(\lk\al\rg,U)$ onto $U$, by (a). We assert that $p|_{(\lk\al\rg,U)}$ is
an open map of $(\lk\al\rg,U)$ onto $U$. For, any open subset of $(\lk\al\rg,U)$ is a union of sets
of the form $(\lk\bt\rg,V)$, where $V\sub U$, and hence the fact that $p|_{(\lk\al\rg,U)}$ is open also
follows from (a). Thus, $p$ maps $(\lk\al\rg,U)$ homeomorphically onto $U$. Since $U$ is
path connected, so is $(\lk\al\rg,U)$. Because the sets $(\lk\al_{\lambda}\rg, U)$ occurring in statement
(b) are pairwise disjoint, it follows that any basic open set $U\sub X$ has all the
properties required of an evenly covered neighborhood.

Next, we shall prove that the space $\wt{X}$ is path connected. Let $\ti{x}\in\wt{ X}$
denote the equivalence class of the constant path at $x$. Given any point $\lk\al\rg\in\wt{X}$,
it suffices to exhibit a path joining the points $\ti{x}$ and $\lk\al\rg$. For this purpose, choose
a path $f:I\lo X$ belonging to the equivalence class $\lk\al\rg$. For any real number
$s\in I$, define $f_s:I\lo X$ by $f_s(t)=f(st)$, $t\in I$. Then $f_1=f$ and $f_0$ is constant
path at $x$. We assert that
the map $s\mapsto \lk\al_s\rg$ is a continuous map $I\lo \wt{X}$, i.e. a path in $\wt{X}$. To prove this
assertion, we must check that, for any $s_0\in I$ and any basic neighborhood $U$
of $f(s_0)$, there exists a real number $\delta > 0$ such that if $|s - so| < \delta$, then
$\lk\al_s\rg\in (\lk\al_{s_0}\rg,U)$. For this purpose, we choose $\delta$ so that if $|s - so| < \delta$, then $f(s)\in U$; such a number $\delta$ exists because $f$ is continuous. Thus $s\mapsto\lk\al_s\rg$ is a path in
$\wt{X}$ with initial point $\ti{x}$ and terminal point $\lk\al\rg$, as required.

Finally, we must prove that $\wt{X}$ is a small loop space. For this, we first show that $p_*\pi_1(\wt{X}, \ti{x})=\pi_1^s(X,x)$. We know that $p_*\pi_1(\wt{X}, \ti{x})$ is the isotropy subgroup corresponding to the point $\ti{x}$ for the action of $\pi_1(X, x)$ on
$p^{-1}(\{x\})$. By Lemma 2.1 it suffices to prove that $p_*\pi_1(\wt{X}, \ti{x})\leq\pi_1^s(X,x)$. For, let $[\al]\in p_*\pi_1(\wt{X},\ti{x})$ which implies that $\ti{x}.[\al]=\ti{x}$. The path $\wt{\al}:I\lo \wt{X}$ by $\wt{\al}(s)=\lk\al_s\rg$ is a lifting of $\al$. This path in
$\wt{X}$ has $\ti{x}$ as initial point and $\lk\al\rg\in\wt{X}$ as terminal point. Hence, by the definition of the action of $\pi_1(X, x)$ on
$p^{-1}(\{x\})$, $\ti{x}.[\al]=\ti{x}$ if and only if $\ti{x}=\lk\al\rg$ or equivalently $\al$ is a small loop. Hence, $p_*\pi_1(\wt{X}, \ti{x})=\pi_1^s(X,x)$, as required. Now by Lemma 4.3 , $\wt{X}$ is small loop space.
\end{proof}
\begin{theorem}
Suppose $X$ is a locally path-connected, semi-locally
small loop space. Then for every subgroup $H\leq\pi_1(X,x)$ containing $\pi_1^{sg}(X,x)$ there is a covering
space $p:\wt{X}_H\lo X$ such that $p_*\pi_1(\wt{X}_H,\ti{x})=H$ for a suitably chosen base point
$\ti{x}\in \wt{X}_H$.
\end{theorem}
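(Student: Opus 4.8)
The plan is to imitate the construction in the proof of Theorem 4.4, replacing the relation coming from small loops by one coming from $H$. On the path space $P(X,x)$ of all paths starting at $x$, I would define $\al_1\sim_H\al_2$ if and only if $\al_1(1)=\al_2(1)$ and $[\al_1*\al_2^{-1}]\in H$; since $H$ is a subgroup of $\pi_1(X,x)$, reflexivity, symmetry and transitivity are immediate. Writing $\lk\al\rg$ for the class of $\al$, I set $\wt{X}_H$ to be the set of these classes and let $p:\wt{X}_H\lo X$ send $\lk\al\rg$ to the terminal point $\al(1)$; the base point $\ti{x}$ is the class of the constant path at $x$.

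I would topologize $\wt{X}_H$ exactly as in Theorem 4.4. Call an open set $U$ \emph{basic} if it is path connected and every loop in $U$ is small; local path connectedness together with the semi-locally small loop hypothesis guarantees that such sets form a basis for $X$. For a class $\lk\al\rg$ and a basic open $U$ containing $p(\lk\al\rg)$, let $(\lk\al\rg,U)$ denote the set of all $\lk\al*\al'\rg$ with $\T{Im}(\al')\sub U$, and declare these to be a basis for $\wt{X}_H$. The verification that they do form a basis is word for word that of Theorem 4.4: given $\gamma\in(\lk\al\rg,U)\cap(\lk\bt\rg,V)$ one chooses a basic open $W$ with $p(\gamma)\in W\sub U\cap V$.

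The crux is the analogue of observations (a) and (b). Observation (b) (that $p^{-1}(U)$ is the disjoint union of the sets $(\lk\al_\lambda\rg,U)$ over the path classes ending at a fixed $y\in U$) is formal and transcribes directly. The heart of the matter is the injectivity in (a): if $\ti{y}=\lk\al*\eta\rg$ and $\ti{z}=\lk\al*\mu\rg$ with $\eta,\mu$ paths in $U$ sharing their endpoints, then $\eta*\mu^{-1}$ is a loop in the basic set $U$, hence a small loop, so $[\eta*\mu^{-1}]\in\pi_1^s(X,x_1)$ where $x_1=\al(1)$. Conjugating by $\al$, the class $[\al*\eta*\mu^{-1}*\al^{-1}]$ lies in $\pi_1^{sg}(X,x)$ \emph{by the very definition of the SG subgroup}, and the hypothesis $\pi_1^{sg}(X,x)\leq H$ then forces this class into $H$, giving $\ti{y}=\ti{z}$. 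This is the single place where the containment $\pi_1^{sg}(X,x)\leq H$ is used, and I expect getting it cleanly stated to be the main obstacle; everything else is the bookkeeping already carried out for Theorem 4.4. With (a) and (b) in hand, $p$ restricts to a homeomorphism of each $(\lk\al\rg,U)$ onto $U$, so every basic $U$ is evenly covered and $p$ is a covering, while path connectedness of $\wt{X}_H$ follows from the same $s\mapsto\lk\al_s\rg$ argument as before.

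It remains to identify the image subgroup. For a loop $\al$ at $x$, the path $s\mapsto\lk\al_s\rg$ (where $\al_s(t)=\al(st)$) is the lift of $\al$ starting at $\ti{x}$, and it ends at $\lk\al\rg$. This lift is closed precisely when $\lk\al\rg=\ti{x}$, i.e. when $[\al]\in H$. Since $[\al]\in p_*\pi_1(\wt{X}_H,\ti{x})$ if and only if the lift of $\al$ at $\ti{x}$ is a loop, I conclude $p_*\pi_1(\wt{X}_H,\ti{x})=H$, as required.
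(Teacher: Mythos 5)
Your proof is correct, but it takes a genuinely different route from the paper's. The paper does not repeat the path-space construction at all: it defines $\wt{X}_H$ as a \emph{quotient} of the small covering $\wt{X}$ already built in Theorem 4.4, identifying $\lk\al\rg$ with $\lk\al'\rg$ whenever $\al(1)=\al'(1)$ and $[\al*\al'^{-1}]\in H$, and observes that any identification of two points forces identification of the whole basic neighborhoods $(\lk\al\rg,U)$ and $(\lk\al'\rg,U)$; hence the induced projection $\wt{X}_H\lo X$ is again a covering, and the image subgroup is computed by the same lifting argument you give at the end. You instead build $\wt{X}_H$ in one step, as the set of $H$-classes of paths, and re-run the Theorem 4.4 verification with $H$ in place of $\pi_1^s(X,x)$. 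Because $\pi_1^s(X,x)\leq\pi_1^{sg}(X,x)\leq H$, your relation is coarser than the small-loop relation, so the two constructions produce the same space; the difference is in how the covering property is established. The paper's route buys brevity, since Theorem 4.4 is reused wholesale, and it makes visible that the small cover $\wt{X}$ itself covers $\wt{X}_H$, as universality demands. Your route buys precision: it isolates the single point where the hypothesis $\pi_1^{sg}(X,x)\leq H$ enters, namely injectivity in observation (a), where $[\al*(\eta*\mu^{-1})*\al^{-1}]$ lies in $H$ because it is a generator of $\pi_1^{sg}(X,x)$ by definition; notably this step needs neither Lemma 4.2 nor the equality $\pi_1^{sg}(X,x)=\pi_1^s(X,x)$, which the proof of Theorem 4.4 did invoke at the corresponding point. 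One small caveat: the pairwise disjointness in your observation (b) is not purely formal as stated --- to see that distinct classes $\lk\al_\lambda\rg\neq\lk\al_\mu\rg$ over the same point $y$ give disjoint sets, one first checks (formally, using only that $H$ is a subgroup) that two basic sets over $U$ which meet must coincide, and then applies the injectivity from (a); so (b) leans on (a) rather than being independent of it, but no new idea is required.
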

\begin{proof}
 For points $[\al],[\al']$ in the small covering space $\wt{X}$ of $X$ constructed
above, define
$[\al]\sim[\al']$ to mean $\al(1)=\al'(1)$ and $[\al*\al^{-1}]\in H$. It is easy to see
that this is an equivalence relation. Let $\wt{X}_H$ be the quotient space of $\wt{X}$
obtained by identifying $[\al]$ with $[\al']$ if $[\al]\sim[\al']$. Note that if $\al(1)=\al'(1)$, then
$[\al]=[\al']$ if and only if $[\al*\bt]=[\al'*\bt]$. This means that if any two points in basic neighborhoods
$(\langle\al\rg,U)$ and $(\langle\al'\rg,U)$ are identified in $\wt{X}_H$, then the whole neighborhoods are identified. Hence
the natural projection $\wt{X}_H\lo X$ induced by $[\al]\mapsto\al(1)$ is a covering space.
If we choose for the base point $\ti{x}\in \wt{X}_H$ the equivalence class of the constant path
$e_x$, then the image of $p_*:\pi_1(\wt{X}_H,\ti{x})\lo\pi_1(X,x)$ is exactly $H$. This is because
for a loop $\al$ in $X$ based at $x$, its lift to $\wt{X}$ starting at $e_x$ ends at $[\al]$, so the image
of this lifted path in $\wt{X}_H$ is a loop if and only if $[\al]=[e_x]$, or equivalently $[\al]\in H$.
\end{proof}

The authors \cite{P} proved that the topological fundamental group of a small loop space is an indiscrete topological group. Also, in Section 3, it is proved that every cover of a small loop space is trivial. By \cite[Theorem 5.5]{B}, the connected covers of $X$ are classified by
conjugacy classes of open subgroups of $\pi_1^{top}(X,x)$, so if $\pi_1^{top}(X,x)$ is an indiscrete topological group, then every cover of $X$ is trivial. Hence there is an interesting closed relation between the small loop properties of a given space $X$, the topological fundamental group $\pi_1^{top}(X,x)$ and the number of covering space of $X$ as in the following theorem.
\begin{theorem}
If $X$ is a locally path connected, semi-locally small loop space, then the following statements are equivalent\\
(i) $X$ is a small loop space.\\
(ii) $\pi_1^{top}(X,x)$ is an indiscrete topological group.\\
(iii) every cover of $X$ is trivial.
\end{theorem}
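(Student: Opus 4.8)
The plan is to establish the cyclic chain of implications $(i)\Rightarrow(ii)\Rightarrow(iii)\Rightarrow(i)$, assembling the two facts recalled in the paragraph preceding the statement together with the constructions developed in this section. Two of the three implications are essentially imported from the cited literature, so the only genuinely new content lies in $(iii)\Rightarrow(i)$.

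For $(i)\Rightarrow(ii)$ I would directly invoke \cite{P}, where it is proved that the topological fundamental group of a small loop space is indiscrete; once $X$ is assumed to be a small loop space there is nothing further to verify. For $(ii)\Rightarrow(iii)$ I would appeal to the classification in \cite[Theorem 5.5]{B}, by which the connected covers of $X$ correspond to conjugacy classes of open subgroups of $\pi_1^{top}(X,x)$. If this group carries the indiscrete topology, then the only nonempty open set is the whole group, so the whole group is its only open subgroup; hence there is, up to equivalence, a single connected cover, namely $X$ itself. Since every cover under consideration is path connected, this says exactly that every cover of $X$ is trivial.

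The heart of the argument is $(iii)\Rightarrow(i)$, where the machinery of the paper enters. Here I would use that $X$ is locally path connected and a semi-locally small loop space, so that Theorem 4.4 furnishes a small covering $p:\wt{X}\lo X$, while Proposition 3.6 identifies its characteristic subgroup as $p_*\pi_1(\wt{X},\ti{x})=\pi_1^s(X,x)$. Assuming $(iii)$, this particular cover is trivial, hence one-sheeted, so $p$ is a homeomorphism and $p_*\pi_1(\wt{X},\ti{x})=\pi_1(X,x)$. Comparing the two descriptions of the image yields $\pi_1^s(X,x)=\pi_1(X,x)$, which is precisely the assertion that $X$ is a small loop space, closing the cycle.

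I do not anticipate a serious obstacle: the two outer implications are quotations, and the inner one is a one-line consequence of Theorem 4.4 and Proposition 3.6. The only point deserving attention is the bookkeeping of conventions, namely that ``trivial cover'' is to be read, in the path connected setting fixed earlier in this section, as ``one-sheeted and therefore homeomorphic to $X$ via $p$''; granting this reading, the implication is immediate. A secondary check is that the notion of open subgroup underlying the classification in \cite[Theorem 5.5]{B} matches the indiscreteness of $\pi_1^{top}(X,x)$ invoked through \cite{P}, but this is routine.
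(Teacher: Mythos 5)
Your proposal is correct and takes essentially the same approach as the paper: both reduce everything to $(iii)\Rightarrow(i)$ by quoting \cite{P} for $(i)\Rightarrow(ii)$ and \cite[Theorem 5.5]{B} for $(ii)\Rightarrow(iii)$, and both settle $(iii)\Rightarrow(i)$ by invoking the small covering $p:\wt{X}\lo X$ supplied by Theorem 4.4, which hypothesis $(iii)$ forces to be trivial. The only cosmetic difference is the last line: the paper concludes that $X$, being homeomorphic to the small loop space $\wt{X}$, is itself a small loop space, whereas you pass through Proposition 3.6 to get $\pi_1^s(X,x)=\pi_1(X,x)$; since you already observe that $p$ is a homeomorphism, either ending works (the homeomorphism also disposes of the need to check the small-loop condition at every base point rather than only at $x$).
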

\begin{proof}
It suffices to prove $(iii)\rightarrow(i)$. Since $X$ is a semi-locally small loop space, it has a small covering by Theorem 4.4  and therefore $X$ is a small loop space, as desired.
\end{proof}
\subsection*{Acknowledgements}
This research was supported by a grant from Ferdowsi University of Mashhad.

\end{document}